\newtheorem{thm}{Theorem}
\newtheorem{cor}[thm]{Corollary}
\newtheorem{knownthm}{Theorem}
\theoremstyle{definition}
\newtheorem{knowndefinition}[knownthm]{Definition}
\newtheorem{rem}{Remark}[section]
\newcommand{\closure}{\overline}
\newcommand{\round}{\partial}
\newcommand{\CC}{\widehat{\mathbb{C}}}
\newcommand{\C}{\mathbb{C}}
\newcommand{\D}{\mathbb{D}}
\newcommand{\A}{\mathcal{A}}
\renewcommand{\SS}{\mathcal{S}}
\newcommand{\epf}{\phantom{} \hfill$ \Box$}
\newcommand{\no}{\noindent}
\newcommand{\dstyle}{\displaystyle}
\renewcommand{\Re}{\textup{Re}\,}
\title[Ahlfors's quasiconformal extension condition]{Ahlfors's quasiconformal extension condition and $\Phi$-likeness}
\author[I. Hotta]{Ikkei Hotta}
\address{Department of Mathematics, University of W\"urzburg, Emil-Fischer-Stra\ss e 40 97074 W\"urzburg, Germany}
\email{ikkeihotta@gmail.com}
\subjclass[2010]{Primary 30C62, 30C55,  Secondary 30C45}
\keywords{quasiconformal mapping; L\"owner (Loewner) chain; Ahlfors's quasiconformal extension condition; $\Phi$-like function}
\date{\today}
\begin{document}

\maketitle

\vspace{-15pt}

\begin{abstract}
The notion of $\Phi$-like functions is known to be a necessary and sufficient condition for univalence.
By applying the idea, we derive several necessary conditions and sufficient conditions for that an analytic function defined on the unit disk is not only univalent but also has a quasiconformal extension to the Riemann sphere, as generalizations of well-known univalence and quasiconformal extension criteria, in particular, Ahlfors's quasiconformal extension condition.
\end{abstract}

\

\

%%%11111111111111111111111111111111111111111111111111111111111111
%%%11111111111111111111111111111111111111111111111111111111111111
%%%11111111111111111111111111111111111111111111111111111111111111
%%%11111111111111111111111111111111111111111111111111111111111111
%%%11111111111111111111111111111111111111111111111111111111111111
\section{Introduction}
%%%11111111111111111111111111111111111111111111111111111111111111
%%%11111111111111111111111111111111111111111111111111111111111111
%%%11111111111111111111111111111111111111111111111111111111111111
%%%11111111111111111111111111111111111111111111111111111111111111
%%%11111111111111111111111111111111111111111111111111111111111111

Let $\C$ be the complex plane, $\D_{r} := \{z \in \C : |z| < r\}$ for $r >0$ and $\D := \D_{1}$.
We denote by $\A$ the family of functions $f (z) = z + \sum_{n = 2}^{\infty} a_{n}z^{n}$ analytic on $\D$ and $\SS$ the subclass of $\A$ whose members are univalent, that is, one-to-one on $\D$.
For standard terminology in the theory of univalent functions, see for instance \cite{PoM:1975} and \cite{Goodman:1983ab}.
Let $k \in [0,1)$ be a constant. 
Then a homeomorphism $f$ of $G \subset \C$ is said to be \textit{$k$-quasiconformal} if $\round_{z} f$ and $\round_{\bar{z}} f$ in the distributional sense are locally integrable on $G$ and fulfill $|\round_{\bar{z}} f| \leq k |\round_{z} f|$ almost everywhere in $G$.
If we do not need to specify $k$, we will simply call that $f$ is \textit{quasiconformal}.

We begin our argument by observing a fundamental composition property of analytic functions.
Let $f,\,g \in \A$ and $Q$ be an analytic function defined on $f(\D)$ which satisfies $g = Q \circ f$.
Then a necessary and sufficient condition for univalence of $g$ on $\D$ is that $f$ and $Q$ are univalent on each domain.
%Therefore if we would like to know whether $f$ is univalent or not, it is enough to see whether so is $g$ or not, namely, existence of $Q$ such that $g$ will be univalent in $\D$.
Let us apply this fact to derive a univalence criterion.
%Here we set one example with a condition for \textit{$\lambda$-spirallike functions}, i.e., $\Re \{ e^{- i \lambda} zg'(z) / g(z) \} > 0$ is satisfied for all $z \in \D$, where $\lambda \in (-\pi/2, \pi/2)$.
In order to demonstrate it, we set one example with a condition for \textit{$\lambda$-spirallike functions}, i.e., $\Re \{ e^{- i \lambda} zg'(z) / g(z) \} > 0$ is satisfied for all $z \in \D$, where $\lambda \in (-\pi/2, \pi/2)$.
%We note that it ensures univalence of $g$, and therefore $f$.
Note that all $\lambda$-spirallike functions are univalent on $\D$.
In view of the relationship $g = Q \circ f$, the condition of $\lambda$-spirallikeness of $g$ is equivalent to
	\begin{equation}\label{phi-like}%%%%%%
	\Re \frac{zf'(z)}{\Phi(f(z))} > 0,
	\end{equation}
where $\Phi(w) = e^{i\lambda} Q(w)/Q'(w)$.
We conclude that \eqref{phi-like} is a sufficient condition of univalence of $f$.
This is an essential idea of the notion of ``$\Phi$-like functions''.

\begin{knowndefinition}
A function $f \in \A$ is said to be \textit{$\Phi$-like} if there exists an analytic function $\Phi$ defined on $f(\D)$ such that \eqref{phi-like}
%\begin{equation}\label{phi-like}
%\Re \frac{zf'(z)}{\Phi(f(z))} >0 
%\end{equation}
holds for all $z \in \D$.
\end{knowndefinition}

\begin{rem}
The inequality \eqref{phi-like} implies $\Phi(0) = 0$ and $\Re \Phi'(0) > 0$.
\end{rem}

The notion of $\Phi$-likeness also turns out a necessary condition for univalence of $f$.
In fact, if $f$ is univalent in $\D$ then we can define $\Phi$ by means of $Q := g \circ f^{-1}$, where $g$ is a spirallike function.
Consequently, we obtain the following:

\begin{knownthm}
A function $f \in \A$ is univalent in $\D$ if and only if $f$ is $\Phi$-like.
\end{knownthm}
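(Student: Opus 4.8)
I would prove the two implications separately, and by rather different means. \emph{Univalence implies $\Phi$-likeness.} Here I would merely make precise the heuristic of the introduction. Assume $f\in\SS$ and fix a $\lambda$-spirallike $g\in\SS$; the cleanest choice is $g(z)=z$ (which is starlike, i.e.\ $0$-spirallike). Since $f$ is univalent, $f^{-1}$ is analytic on $f(\D)$, so $Q:=g\circ f^{-1}$ is analytic there and $Q'=(g'\circ f^{-1})\cdot(f^{-1})'$ is zero-free because each factor is. Then $\Phi(w):=e^{i\lambda}Q(w)/Q'(w)$ is analytic on $f(\D)$ with $\Phi(0)=0$, and differentiating $g=Q\circ f$ — which gives $Q(f(z))=g(z)$ and $Q'(f(z))f'(z)=g'(z)$ — yields
\[
\frac{zf'(z)}{\Phi(f(z))}=e^{-i\lambda}\,\frac{z f'(z)\,Q'(f(z))}{Q(f(z))}=e^{-i\lambda}\,\frac{zg'(z)}{g(z)},
\]
whose real part is positive by the $\lambda$-spirallikeness of $g$. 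Hence $f$ is $\Phi$-like (with $g(z)=z$ this ratio is identically $1$).

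\emph{$\Phi$-likeness implies univalence.} Set $p(z):=zf'(z)/\Phi(f(z))$, so that the hypothesis reads $\Re p>0$ on $\D$. I would begin with a preliminary regularity point, which I expect to be the step demanding the most care: the inequality $\Re p>0$ \emph{alone} forces $f$ to be locally univalent, i.e.\ $f'\neq0$ on $\D$ — indeed, near a zero $z_0$ of $f'$ a local expansion shows that $p$ assumes values of every argument in each punctured neighbourhood of $z_0$, contradicting $\Re p>0$ — and it likewise forces $\Phi\circ f$ to be zero-free on $\D\setminus\{0\}$. Together with $\Phi(0)=0$ and $\Re\Phi'(0)>0$ (forced by the inequality, as in the Remark), this makes $p$ a zero-free analytic function on the whole of $\D$ with $\Re p>0$; in particular $\Re(1/p)>0$ on $\D$, and the vector field $V(z):=-\Phi(f(z))/f'(z)$ is holomorphic on $\D$.

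Now suppose, for contradiction, that $f(z_1)=f(z_2)=:w^{*}$ with $z_1\neq z_2$ in $\D$, and for $j=1,2$ let $\gamma_j$ solve the autonomous ODE $\dot\gamma_j=V(\gamma_j)$ with $\gamma_j(0)=z_j$, which is uniquely solvable since $V$ is holomorphic. The crucial identity is
\[
\frac{d}{dt}\,|\gamma_j(t)|^{2}=2\,\Re\bigl(\overline{\gamma_j(t)}\,\dot\gamma_j(t)\bigr)=-2\,|\gamma_j(t)|^{2}\,\Re\frac{1}{p(\gamma_j(t))}\le 0,
\]
using $\overline{\gamma_j}\,\dot\gamma_j=-|\gamma_j|^{2}/p(\gamma_j)$ and $\Re(1/p)>0$. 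Thus $|\gamma_j|$ is nonincreasing, $\gamma_j(t)$ never leaves the compact disk $\{|z|\le\max(|z_1|,|z_2|)\}\subset\D$, so $\gamma_j$ exists for all $t\ge0$; and since $\Re(1/p)$ has a positive lower bound on that compact disk, a Gr\"onwall-type estimate gives $\gamma_j(t)\to0$ as $t\to\infty$. On the other hand $w_j(t):=f(\gamma_j(t))$ satisfies, by the chain rule, $\dot w_j=-\Phi(w_j)$ with $w_j(0)=w^{*}$; as $\Phi$ is locally Lipschitz on $f(\D)$, uniqueness forces $w_1\equiv w_2$, i.e.\ $f(\gamma_1(t))=f(\gamma_2(t))$ for all $t\ge0$. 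Since $f'(0)=1$, the function $f$ is injective on a neighbourhood of $0$; because $\gamma_1(t),\gamma_2(t)\to0$ they eventually lie in that neighbourhood, where the last equality gives $\gamma_1(t)=\gamma_2(t)$, and then uniqueness of the ODE (run backwards in time) yields $\gamma_1\equiv\gamma_2$, whence $z_1=z_2$ — a contradiction. Therefore $f$ is univalent.

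Conceptually the $\gamma_j$ are just the characteristics of the Loewner--Kufarev transport equation $\partial_tF(z,t)=zF'(z,t)/p(z)$ attached to $f$, so this is in spirit a Loewner-chain argument; I would nevertheless present it in the self-contained ODE form above, since the evolution here contracts $\D$ toward the origin and hence does not carry the usual Loewner normalization one would need in order to quote a ready-made univalence criterion. The only genuinely delicate point, as flagged, is the a priori claim that the $\Phi$-like inequality already entails $f'\neq0$ on $\D$; granted that, $V$ is a holomorphic (in particular pole-free) vector field on the disk and the remaining steps are routine.
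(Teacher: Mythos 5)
Your proof is correct. The necessity half (univalent $\Rightarrow$ $\Phi$-like, via $Q=g\circ f^{-1}$ with $g$ spirallike, or simply $g(z)=z$) is exactly the paper's argument. For the sufficiency half you take a genuinely different route: you give Brickman's original dynamical proof, flowing along the holomorphic field $V=-\Phi\circ f/f'=-z/p(z)$, showing trajectories contract to the origin because $\Re(1/p)>0$, pushing the flow forward to $\dot{w}=-\Phi(w)$ in the image, and concluding injectivity from ODE uniqueness together with local injectivity of $f$ at $0$. The paper instead runs the composition trick in reverse: it writes $\Phi=e^{i\lambda}Q/Q'$ and observes that the $\Phi$-like inequality is precisely $\lambda$-spirallikeness of $g=Q\circ f$, so $g$, hence $f$, is univalent. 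Each approach buys something. The paper's route is the one that generalizes to the quasiconformal-extension results that are the point of the article (one replaces ``spirallike $\Rightarrow$ univalent'' by a L\"owner-chain criterion applied to $g=Q\circ f$); but as a proof of the bare equivalence it quietly requires recovering $Q$ from an arbitrary $\Phi$ by integrating $Q'/Q=e^{i\lambda}/\Phi$, which needs a normalization of $\Phi$ so that the residue of $e^{i\lambda}/\Phi$ at $0$ is $1$, and single-valuedness of the resulting $Q$ on $f(\D)$ --- a domain not yet known to be simply connected before univalence is established. Your ODE argument sidesteps that construction entirely and is self-contained (it is, as you note, the characteristic flow of the chain $F(z,t)=e^{t}Q(f(z))$ appearing later in the paper). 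The one step you rightly flag as delicate --- that $\Re p>0$ already forces $f'\neq0$ and $\Phi\circ f\neq0$ off the origin --- does hold: if $f'(z_{0})=0$ and $\Phi(f(z_{0}))\neq0$ then $p(z_{0})=0$, while if $\Phi(f(z_{0}))=0$ then $\Phi\circ f$ vanishes at $z_{0}$ to order at least $m+1$ when $f'$ vanishes to order $m$, so $p$ has a pole at $z_{0}$; either way $\Re p>0$ fails near $z_{0}$.
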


\begin{rem}
If we choose $\Phi(w) = e^{i \lambda} w$ then it immediately follows the condition for $\lambda$-spirallikeness.
\end{rem}

The concept of $\Phi$-like function was introduced by Kas'yanyuk \cite{Kasyanyuk:1959} and Brickman \cite{Brickman:1973} independently. The reader is referred to \cite[$\S$7]{Avkhadiev:1975} which contains some more information about $\Phi$-like functions.
The above instructive characterization of $\Phi$-like functions is due to Ruscheweyh \cite{Ruscheweyh:pre01}. %\footnote{The author would like to thank Professor Stephan Ruscheweyh for providing him with a copy of the unpublished article \cite{Ruscheweyh:pre01}.}.
Furthermore, he gave the following two generalizations of well-known univalence conditions by the same technique as it:

\begin{knownthm}[Generalized Becker condition \cite{Ruscheweyh:pre01}]\label{gbecker}%%%%%%
Let $f \in \A$.
Then $f$ is univalent if and only if there exists an analytic function $\Omega$ on $f(\D)$ such that
	\begin{equation}\label{gbeckereq}%%%%%%
	(1-|z|^{2})
	\left|
	\frac{zf''(z)}{f'(z)} + zf'(z) \Omega(f(z))
	\right|
	\leq 1
	\end{equation}
for all $z \in \D$.
\end{knownthm}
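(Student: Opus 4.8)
The plan is to follow the same composition philosophy that underlies the $\Phi$-like characterization, but now at the level of Loewner chains rather than single functions. The classical Becker univalence criterion states that if $(1-|z|^2)|zf''(z)/f'(z)| \le 1$ for all $z \in \D$ then $f$ is univalent; this is proved by embedding $f$ into a Loewner chain $f_t(z) = f(e^{-t}z) + \dots$ or, more precisely, by exhibiting a subordination chain. So the strategy for the ``if'' direction is: given $f \in \A$ and an analytic $\Omega$ on $f(\D)$ satisfying \eqref{gbeckereq}, set $g := Q \circ f$ where $Q$ is the solution of the ODE $Q'(w)/Q(w) = $ something built from $\Omega$ — concretely, choose $Q$ so that $Q''(w)/Q'(w) = \Omega(w)$ (equivalently $\log Q'(w)$ has derivative $\Omega(w)$), so that $Q'$ never vanishes and $Q$ is locally univalent on $f(\D)$. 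Then a direct computation of $g''/g'$ via the chain rule gives
\[
\frac{zg''(z)}{g'(z)} = \frac{zf''(z)}{f'(z)} + zf'(z)\,\Omega(f(z)),
\]
so that \eqref{gbeckereq} becomes exactly the classical Becker condition $(1-|z|^2)|zg''(z)/g'(z)| \le 1$ for $g$. By Becker's theorem $g$ is univalent on $\D$, and since $g = Q\circ f$ with $g$ univalent, the composition fact recalled at the start of the paper (univalence of $Q\circ f$ forces univalence of both factors) yields that $f$ is univalent on $\D$.

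For the ``only if'' direction I would run the argument in reverse: suppose $f \in \A$ is univalent. Pick any fixed function $h$ satisfying the classical Becker bound strictly — for instance, one can take $h(z) = z$ itself, since $(1-|z|^2)|zh''/h'| = 0 \le 1$ — and define $Q := h \circ f^{-1}$, which is univalent (hence locally univalent, $Q' \ne 0$) on $f(\D)$. Then set $\Omega(w) := Q''(w)/Q'(w)$, which is analytic on $f(\D)$ because $Q' \ne 0$ there. The same chain-rule identity above, now read from left to right with $g := Q \circ f = h$, shows that the left side of \eqref{gbeckereq} equals $(1-|z|^2)|zh''(z)/h'(z)| = 0 \le 1$, so \eqref{gbeckereq} holds. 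This mirrors exactly the way the necessity half of the $\Phi$-like theorem was obtained in the excerpt (there one took $g$ spirallike; here one takes $g$ Becker-admissible, the simplest choice being linear).

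The main obstacle I anticipate is not the algebra — the chain-rule computation of $g''/g'$ in terms of $f''/f'$ and $\Omega(f)$ is routine — but rather the care needed around the function $Q$: one must check that the ODE $\log Q' )' = \Omega$ on $f(\D)$ genuinely has an analytic solution with $Q' \ne 0$ (this is automatic since $f(\D)$ is simply connected and $\Omega$ is analytic there, so $\Omega$ has an analytic antiderivative and its exponential is a nonvanishing analytic $Q'$), and then that $Q$ itself, as an antiderivative of $Q'$, is defined and analytic on all of $f(\D)$ — again fine by simple connectivity. One should also normalize $Q$ so that $g = Q\circ f \in \A$, i.e. $Q(0) = 0$ and $Q'(0)f'(0) = 1$; this only fixes two constants of integration and does not affect $Q''/Q'$. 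A secondary point is that Becker's theorem as usually stated gives univalence directly, so no separate Loewner-chain bookkeeping is needed here — we may simply invoke it. Thus the proof is essentially a transportation of the classical Becker criterion through the substitution $g = Q \circ f$, exactly parallel to how \eqref{phi-like} was derived from $\lambda$-spirallikeness.
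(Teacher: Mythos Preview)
Your approach is exactly the one the paper sketches: in one direction solve $Q''/Q'=\Omega$ and observe that $g=Q\circ f$ satisfies the classical Becker bound, hence is univalent, hence so is $f$; in the other direction set $Q=g\circ f^{-1}$ for some $g$ satisfying Becker's condition (your choice $g(z)=z$ is fine) and read off $\Omega=Q''/Q'$.

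There is, however, one genuine slip in your justification of the ``if'' direction. You write that the ODE $(\log Q')'=\Omega$ has a global analytic solution on $f(\D)$ ``since $f(\D)$ is simply connected''. But at this stage $f$ is merely in $\A$, not yet known to be univalent, and the image of $\D$ under a holomorphic map need not be simply connected (e.g.\ $z\mapsto (e^{Rz}-1)/R$ for $R>\pi$ lies in $\A$ and has non--simply-connected image). So you cannot integrate $\Omega$ on $f(\D)$ without further argument.

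The fix is to work on $\D$ rather than on $f(\D)$. First note that \eqref{gbeckereq} forces $f'\neq 0$ on $\D$ (a zero of $f'$ at $z_0\neq 0$ would make $zf''/f'$ blow up while the $\Omega$-term stays bounded). Then the right-hand side of
\[
\frac{g''(z)}{g'(z)}\;=\;\frac{f''(z)}{f'(z)}+f'(z)\,\Omega(f(z))
\]
is analytic on the simply connected domain $\D$, so it has an antiderivative $\psi$ there; setting $g'(z)=e^{\psi(z)}$ (normalized so $g'(0)=1$) and integrating once more gives $g\in\A$ with $zg''/g'$ equal to the bracketed expression in \eqref{gbeckereq}. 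Now Becker applies to $g$, and since locally $g=Q\circ f$ with $Q$ locally conformal, univalence of $g$ on $\D$ forces univalence of $f$ on $\D$. With this adjustment your argument goes through and matches the paper's.
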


\begin{knownthm}[Generalized Bazilevi\v c functions \cite{Ruscheweyh:pre01}]\label{gbazil}%%%%%%
Let $f \in \A$, $p(z)$ with $p(0) = p'(0)-1 = 0$ be starlike univalent in $\D$ and $s = \alpha + i \beta \in \C,\,\Re s > 0$.
Then $f$ is univalent in $\D$ if and only if there exists an analytic function $\Psi(w)$ on $f(\D)$ with $\Psi (0) \neq 0$ such that
	\begin{equation}\label{gbazileq}%%%%%%
	\Re
	\frac
	{f'(z) \left( f(z)/z\right)^{s-1}}
	{\left(p(z)/z\right)^{\alpha}}
	\Psi(f(z)) > 0
	\end{equation}
	for all $z \in \D$.
\end{knownthm}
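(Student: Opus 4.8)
The plan is to run the composition scheme $g=Q\circ f$ exactly as for the characterisation of $\Phi$-like functions and for the generalized Becker condition (Theorem~\ref{gbecker}), now taking the classical \emph{Bazilevi\v c functions} as the reference univalent family. The one external ingredient I shall treat as a black box is Bazilevi\v c's univalence theorem in the following form: if $p$ is a normalized starlike function, $s=\alpha+i\beta$ with $\alpha=\Re s>0$, and $h$ is analytic on $\D$ with $\Re h>0$, then
\[
g(z)=\left(s\int_{0}^{z} h(t)\left(\frac{p(t)}{t}\right)^{\alpha} t^{s-1}\,dt\right)^{1/s}
\]
is a well-defined analytic univalent function on $\D$; moreover, differentiating $g(z)^{s}=s\int_{0}^{z}h(t)\bigl(p(t)/t\bigr)^{\alpha}t^{s-1}\,dt$ yields the differential identity
\[
\frac{g'(z)\bigl(g(z)/z\bigr)^{s-1}}{\bigl(p(z)/z\bigr)^{\alpha}}=h(z).
\]
Besides this, the only thing used is the trivial fact that if $g=Q\circ f$ and $g$ is injective then $f$ is injective.

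For sufficiency, assume \eqref{gbazileq} holds and set $h(z):=f'(z)\bigl(f(z)/z\bigr)^{s-1}\bigl(p(z)/z\bigr)^{-\alpha}\Psi(f(z))$, so that $\Re h>0$ on $\D$ by hypothesis and $h(0)=\Psi(0)$ (in particular $\Psi(0)\ne0$ is automatic). Let $g$ be the Bazilevi\v c function built from this $h$ and the given $p$; it is univalent. The key computation is that $g$ factors through $f$: substituting the definition of $h$ into the integrand cancels the $\bigl(p(t)/t\bigr)^{\alpha}$ factors and leaves $g(z)^{s}=s\int_{0}^{z}f'(t)f(t)^{s-1}\Psi(f(t))\,dt$, and the change of variable $\zeta=f(t)$ turns this into $g(z)^{s}=s\int_{0}^{f(z)}\zeta^{s-1}\Psi(\zeta)\,d\zeta$, i.e.\ $g=Q\circ f$ with $Q(w):=\bigl(s\int_{0}^{w}\zeta^{s-1}\Psi(\zeta)\,d\zeta\bigr)^{1/s}$. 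Here $Q$ is analytic on the simply connected domain $f(\D)$: writing the integral as $w^{s}$ times a single-valued analytic factor whose value at $0$ is $\Psi(0)\ne0$, the $1/s$-th root is legitimate for the same reason the Bazilevi\v c function $g$ is well defined. Since $g$ is univalent and $g=Q\circ f$, the map $f$ is univalent.

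For necessity, suppose $f$ is univalent and take the Bazilevi\v c function associated with $h\equiv1$, namely $g(z)=\bigl(s\int_{0}^{z}\bigl(p(t)/t\bigr)^{\alpha}t^{s-1}\,dt\bigr)^{1/s}$, which lies in $\A$ and is univalent. Then $Q:=g\circ f^{-1}$ is a well-defined univalent analytic function on $f(\D)$ with $Q(0)=0$ and $Q'(0)=1$, so $Q(w)/w$ and $Q'(w)$ are zero-free analytic functions on $f(\D)$ and
\[
\Psi(w):=Q'(w)\bigl(Q(w)/w\bigr)^{s-1}
\]
is analytic on $f(\D)$ with $\Psi(0)=1\ne0$. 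Evaluating at $w=f(z)$ and using $Q'(f(z))=g'(z)/f'(z)$ and $Q(f(z))=g(z)$, the left-hand side of \eqref{gbazileq} collapses to $g'(z)\bigl(g(z)/z\bigr)^{s-1}\bigl(p(z)/z\bigr)^{-\alpha}$, which by the displayed differential identity with $h\equiv1$ equals $1$ identically; in particular its real part is positive, so \eqref{gbazileq} holds.

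The two differential-identity computations and the substitution are routine. The step I expect to be most delicate is the branch bookkeeping in the sufficiency part: one must fix all the fractional powers $(\,\cdot\,)^{s-1}$, $(\,\cdot\,)^{\alpha}$, $(\,\cdot\,)^{1/s}$ consistently (principal branches normalized by their value at the origin), observe that $f(z)/z$ must be zero-free on $\D$ for the left-hand side of \eqref{gbazileq} to make sense at all, and — the real point — verify that the formal relation $g^{s}=Q^{s}\circ f$ genuinely upgrades to $g=Q\circ f$ as single-valued analytic functions, together with the well-definedness of $Q$ on all of $f(\D)$. These are precisely the integrability and zero-freeness facts encapsulated in the classical theory of Bazilevi\v c functions, which is why it is cleanest to invoke Bazilevi\v c's univalence theorem as a black box rather than reprove it here.
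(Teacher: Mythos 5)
Your argument is correct and follows essentially the same route as the paper's own outline: sufficiency by exhibiting $g=Q\circ f$ with $Q(w)=\bigl(s\int_{0}^{w}t^{s-1}\Psi(t)\,dt\bigr)^{1/s}$ as a Bazilevi\v c function (hence univalent), and necessity by setting $Q=g\circ f^{-1}$ for a Bazilevi\v c $g$ and reading off $\Psi$. Your presentation merely reverses the order (building $g$ from $h$ and then factoring it through $f$) and makes the paper's ``suitable'' choice explicit as $h\equiv 1$; the branch and well-definedness caveats you flag are likewise left implicit in the paper's sketch.
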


\begin{rem}
The choices $\Omega \equiv 0$ and $\Psi \equiv e^{i\alpha}$ correspond to the original univalence conditions due to Becker \cite{Becker:1972} and Bazilevi\v c \cite{Bazilevic:1955} respectively.
\end{rem}

Since Ref. \cite{Ruscheweyh:pre01} is not published, we outline proofs of Theorem \ref{gbecker} and Theorem \ref{gbazil} for convenience.
We can show the case \eqref{gbeckereq} from the fact that $g(z) = Q(f(z))$ with $Q''/Q' = \Omega$ satisfies original Becker's univalence condition, and the case \eqref{gbazileq} from that the function $g(z) = Q(f(z))$ with
$
	Q (w)= 
	\left(
	s
	\int_{0}^{w} t^{s-1} \Psi(t) dt
	\right)^{1/s}
	=
	\Psi(0) w + \cdots
$
is Bazilevi\v c and hence univalent in $\D$.
The other directions of Theorem \ref{gbecker} and Theorem \ref{gbazil} can be easily proved to define $\Omega$ and $\Psi$ by $Q(w) = g(f^{-1}(w)),\, w \in f(\D)$, where $g$ is a suitable function which satisfies Becker's condition or the Bazilevi\v c function, respectively.

The main aim of this paper is to derive several necessary conditions and sufficient conditions for that a function $f \in \A$ is univalent in $\D$ and extendible to a quasiconformal mapping to the Riemann sphere $\CC := \C \cup \{\infty\}$
as an application of Ruscheweyh's characterization of $\Phi$-like functions.
These results are based on well-known univalence and quasiconformal extension criteria.
For instance, the next theorem which is a generalization of Ahlfors's quasiconformal extension condition \cite{Ahlfors:1974} (see also \cite{Becker:1976}) will be obtained. 
Here $\SS(k),\,0 \leq k < 1,$ is the family of functions which are in $\SS$ and can be extended to $k$-quasiconformal mappings to $\CC$.

\begin{thm}\label{gbeckerqc}%%%%%%
Let $f \in \A$ and $k \in [0,1)$.
If there exists a $k' \in [0,k)$ and an analytic function $Q$ defined on $f(\D)$ which is univalent in $f(\D)$, has a $(k-k')/(1-kk')$-quasiconformal extension to $\CC$ and satisfies $Q'(0) \neq 0$ such that for a constant $c \in \C$ and for all $ z \in \D$ 
	\begin{equation}\label{gbeckerqceq}%%%%%%
	\left| c |z|^{2} + 
	(1-|z|^{2})
	\left\{
	\frac{zf''(z)}{f'(z)} + zf'(z) \Omega(f(z))
	\right\}
	\right|
	\leq k',
	\end{equation}
then $f \in \SS(k)$, where $\Omega = Q''/Q'$.
Conversely, if $f \in \SS(k)$ then there exists a $k' \in [0,1)$ and an analytic function $Q$ defined on $f(\D)$ which is univalent in $f(\D)$, has a $(k+k')/(1+kk')$-quasiconformal extension to $\CC$ and satisfies $Q'(0) \neq 0$ such that the inequality \eqref{gbeckerqceq} holds for a constant $c \in \C$ and for all $z \in \D$.
\end{thm}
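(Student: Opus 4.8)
The plan is to run the composition trick $g=Q\circ f$ used for Theorem~\ref{gbecker}, but now tracking quasiconformal dilatations, and to feed the resulting map into Ahlfors's quasiconformal extension condition \cite{Ahlfors:1974,Becker:1976}, which I take as the engine. For the sufficiency direction I would set $g:=Q\circ f$, analytic on $\D$ with $g'(0)=Q'(0)\neq 0$, and first record the elementary identity
\[
\frac{zg''(z)}{g'(z)} \;=\; \frac{zf''(z)}{f'(z)} + zf'(z)\,\Omega(f(z)),\qquad \Omega=Q''/Q',
\]
so that \eqref{gbeckerqceq} says exactly $\bigl|\,c|z|^2+(1-|z|^2)\,zg''(z)/g'(z)\bigr|\le k'$ for all $z\in\D$. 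Passing to the normalised map $\tilde g(z):=(g(z)-g(0))/g'(0)\in\A$, which has the same pre-Schwarzian $g''/g'$ and hence satisfies the same inequality, I would invoke Ahlfors's quasiconformal extension condition to conclude $\tilde g\in\SS(k')$; therefore $g$, an affine image of $\tilde g$, also extends to a $k'$-quasiconformal homeomorphism $\hat g$ of $\CC$.

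Next I would write $f=Q^{-1}\circ g$ on $\D$, which makes sense since $Q$ is univalent on $f(\D)$ so $Q^{-1}$ is defined on $g(\D)=Q(f(\D))$. Let $\hat Q\colon\CC\to\CC$ be the given $q$-quasiconformal extension of $Q$, with $q=(k-k')/(1-kk')$. Being a homeomorphism of $\CC$ that restricts to the injective map $Q$ on $f(\D)$, $\hat Q$ carries $f(\D)$ bijectively onto $Q(f(\D))$, so $\hat Q^{-1}$ restricts to $Q^{-1}$ there and the self-homeomorphism $\hat Q^{-1}\circ\hat g$ of $\CC$ agrees with $f$ on $\D$. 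By the composition law for quasiconformal maps this composite is $\dfrac{q+k'}{1+qk'}$-quasiconformal, and a one-line computation gives $\dfrac{q+k'}{1+qk'}=k$. Hence $f$ is univalent on $\D$ and extends to a $k$-quasiconformal homeomorphism of $\CC$, i.e. $f\in\SS(k)$.

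For the necessity direction the cleanest choice is to read the inverse map as $Q$: take $Q:=f^{-1}$, $k':=0$ and $c:=0$. Then $Q$ is analytic and univalent on $f(\D)$ with $Q'(0)=1\neq0$; if $\hat f$ is a $k$-quasiconformal extension of $f$ then $\hat f^{-1}$ is a $k$-quasiconformal extension of $Q$, and $k=(k+k')/(1+kk')$ with $k'=0$, as required; and since $\Omega=Q''/Q'$ makes $g=Q\circ f$ the identity, the bracketed expression in \eqref{gbeckerqceq} vanishes identically and the inequality reads $0\le0$.

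I expect the only delicate point to be the dilatation bookkeeping in the sufficiency direction: checking that composing the quasiconformal extension $\hat g$ of $g$ with the inverse $\hat Q^{-1}$ of the quasiconformal extension of $Q$ produces a \emph{global} homeomorphism of $\CC$ that genuinely restricts to $f$ on $\D$, and that the constants combine through the addition formula $k_1\oplus k_2=(k_1+k_2)/(1+k_1k_2)$ to hit exactly the target $k$. (One should also note in passing that the constant $c$ is admissible for the cited base theorem.) The differentiation identity, the normalisation to $\tilde g\in\A$, and the necessity direction are all routine.
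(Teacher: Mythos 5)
Your proposal is correct and follows essentially the same route as the paper: decompose $g=Q\circ f$, observe that \eqref{gbeckerqceq} is exactly Ahlfors's condition for $g$, extract a $k'$-quasiconformal extension of $g$, and then compose with the inverse of the given quasiconformal extension of $Q$, with the dilatations combining via $(q+k')/(1+qk')=k$. The only (harmless) differences are that the paper inlines the Löwner-chain verification of Ahlfors's condition for $g$ via the explicit chain $F(z,t)=Q(f(e^{-t}z))+(1+c)^{-1}(e^{t}-e^{-t})zQ'(f(e^{-t}z))f'(e^{-t}z)$ rather than citing it as a black box, and for the converse it takes $Q=g\circ f^{-1}$ with $g$ satisfying Ahlfors's $k'$-condition, of which your degenerate choice $Q=f^{-1}$, $k'=0$, $c=0$ is a valid special case.
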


\begin{rem} 
In Theorem \ref{gbeckerqc}, if the extended quasiconformal mapping of $Q$ does not take the value $\infty$ in $\C$ then $\SS(k)$ can be replaced by $\SS_{0}(k)$, where $\SS_{0}(k)$ is the family of functions which belong to $\SS(k)$ and can be extended to $k$-quasiconformal automorphisms of $\C$.
\end{rem}

In contrast to the case of univalent functions, it is not always true that if $g \circ f$ has a quasiconformal extension then so do $f$ and $g$ as well. 
This is the reason why the function $Q$ is required some bothersome assumptions in Theorem \ref{gbeckerqc}.
On the other hand, if we give a specific form of $Q$, then it can be obtained several new quasiconformal extension criteria which are of practical use.
We will discuss this problem in the last section.

%%%2222222222222222222222222222222222222222222222222
%%%2222222222222222222222222222222222222222222222222
%%%2222222222222222222222222222222222222222222222222
%%%2222222222222222222222222222222222222222222222222
%%%2222222222222222222222222222222222222222222222222
\section{Preliminaries}
%%%2222222222222222222222222222222222222222222222222
%%%2222222222222222222222222222222222222222222222222
%%%2222222222222222222222222222222222222222222222222
%%%2222222222222222222222222222222222222222222222222
%%%2222222222222222222222222222222222222222222222222

Let $f_{t}(z) = f(z,t) = \sum_{n=1}^{\infty}a_{n}(t)z^{n},\,a_{1}(t) \neq 0,$ be a function defined on $\D \times [0,\infty)$ and analytic in $\D$ for each $t \in [0,\infty)$, where $a_{1}(t)$ is a locally absolutely continuous function on $[0,\infty)$ and $\lim_{t \to \infty}|a_{1}(t)| = \infty$.
$f_{t}$ is said to be a \textit{L\"owner chain} if $f_{t}$ is univalent on $\D$ for each $t \in [0,\infty)$ and satisfies 
$f_{s}(\D) \subsetneq f_{t}(\D)$ for $0 \leq s < t < \infty$.

The following necessary and sufficient condition for L\"owner chains due to Pommerenke is well-known.

\def\labelenumi{\arabic{enumi}.}
\begin{knownthm}[\cite{Pom:1965}]\label{pom}%%%%%%
Let $0 < r_{0} \leq 1$.
Let $f(z,t)$ be a function defined above.
Then the function $f(z,t)$ is a L\"owner chain if and only if the following conditions are satisfied:
	\begin{enumerate}
	\item The function $f(z,t)$ is analytic in $\D_{r_{0}}$ for each $t \in [0,\infty)$, 
	locally absolutely continuous in $[0,\infty)$ for each $z \in \D_{r_{0}}$ and
		\begin{equation*}\label{pom2ineq}%%%%%%
		|f(z,t)| \leq k' |a_{1}(t)| \hspace{20pt} (z \in \D_{r_{1}},\,\textup{a.e.}~t \in [0,\infty))
		\end{equation*}
	for some positive constants $k'$.
	\item There exists a function $p(z,t)$ analytic in $\D$ for each $t \in [0,\infty)$ 
	and measurable in $[0,\infty)$ for each $z \in \D$ satisfying
		$$
		\Re p(z,t) > 0 \hspace{20pt} (z \in \D,\,t \in [0,\infty))
		$$
	such that
		\begin{equation*}\label{LDE}%%%%%%
		\round_{t}f(z,t) =z \round_{z}f(z,t) p(z,t)  \hspace{20pt} (z \in \D_{r_{1}},\,\textup{a.e.}~t \in [0,\infty)).
		\end{equation*}
	\end{enumerate}
\end{knownthm}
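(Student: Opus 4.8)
The plan is to prove both implications through the machinery of \emph{transition functions} together with the Herglotz representation of functions of positive real part. For the necessity direction, suppose $f(z,t)$ is a L\"owner chain. Since $f_{s}(\D) \subsetneq f_{t}(\D)$ for $s < t$ and each $f_{t}$ is univalent, I would first introduce the transition functions $v(z,s,t) := f_{t}^{-1}(f_{s}(z))$, which are well-defined analytic univalent self-maps of $\D$ fixing the origin, with $v'(0) = a_{1}(s)/a_{1}(t)$ and the semigroup relation $v(\cdot,s,u) = v(\cdot,t,u)\circ v(\cdot,s,t)$. Normalizing $g_{t} := f_{t}/a_{1}(t) \in \SS$ and invoking the growth theorem for $\SS$ gives $|g_{t}(z)| \leq |z|/(1-|z|)^{2}$, hence $|f(z,t)| \leq k'|a_{1}(t)|$ on a subdisk $\D_{r_{1}}$ with $k' = r_{1}/(1-r_{1})^{2}$, which yields the first condition; the local absolute continuity of $f(z,\cdot)$ is then deduced from that of $a_{1}$ combined with the Schwarz-lemma estimates on $v$.

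Next I would derive the L\"owner--Kufarev equation. By the Schwarz lemma $|v(z,s,t)| \leq |z|$, so the difference quotients $(v(z,t,t+h)-z)/h$ form a normal family, and passing to a subsequential limit produces an infinitesimal generator of the form $-z\,p(z,t)$. The crucial point is that, since each $v$ is a self-map of $\D$ fixing $0$, this generator has the Herglotz structure forcing $\Re p(z,t) \geq 0$, and the strict monotonicity $f_{s}(\D) \subsetneq f_{t}(\D)$ upgrades the inequality to $\Re p(z,t) > 0$. Differentiating the identity $f(z,s) = f(v(z,s,t),t)$ in $t$ at $t = s$ then gives
$$\round_{t}f(z,t) = z\,\round_{z}f(z,t)\,p(z,t) \qquad (\textup{a.e.}~t),$$
which is the second condition.

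For the sufficiency direction, assume the two conditions hold. The plan is to solve, for each fixed $z \in \D$ and each $s \geq 0$, the Carath\'eodory initial value problem
$$\frac{dw}{dt} = -w\,p(w,t), \qquad w(s) = z,$$
whose right-hand side is measurable in $t$ and analytic in $w$. Because $\frac{d}{dt}|w|^{2} = -2|w|^{2}\,\Re p(w,t) < 0$, the modulus $|w(t)|$ is strictly decreasing, so solutions remain in $\D$ and exist for all $t \geq s$. A direct computation using the PDE shows $\frac{d}{dt}f(w(t),t) = \round_{z}f\,(\dot w + w\,p) = 0$, whence $f(w(t;z,s),t) = f(z,s)$; this realizes $f_{s} = f_{t}\circ v(\cdot,s,t)$ with $v(\cdot,s,t)$ the time-$t$ flow map, which is a univalent self-map of $\D$ with $v(\D,s,t) \subsetneq \D$. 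From this the univalence of each $f_{t}$ and the strict nesting $f_{s}(\D) \subsetneq f_{t}(\D)$ follow at once, while linearizing the flow near $0$ gives $|a_{1}(s)/a_{1}(t)| = \exp(-\int_{s}^{t}\Re p(0,\tau)\,d\tau)$, so the growth bound and $|a_{1}(t)| \to \infty$ supply the required normalization.

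I expect the main obstacle to lie in the necessity direction: rigorously justifying differentiability in $t$ for \emph{almost every} $t$ rather than on a mere subset, and extracting the positive-real-part function $p(z,t)$ from the infinitesimal generators while controlling the dependence on $z$ uniformly on compact subsets of $\D$ (this is where the Herglotz representation and a Vitali-type normal-family argument must be combined carefully). On the sufficiency side the delicate step is the Carath\'eodory theory for an ODE with only measurable time dependence, together with the verification that the flow maps are genuinely contracting so that the inclusions $f_{s}(\D) \subsetneq f_{t}(\D)$ are strict.
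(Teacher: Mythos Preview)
The paper does not prove this statement at all: Theorem~\ref{pom} is a cited preliminary result attributed to Pommerenke \cite{Pom:1965}, recorded in the Preliminaries section solely so that it can be invoked later (together with Becker's Theorem~\ref{becker}) in the proofs of Theorems~\ref{gbeckerqc} and~\ref{NW}. There is therefore nothing in the paper's own argument to compare your proposal against.

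That said, your outline is essentially the standard route to Pommerenke's theorem as found in \cite{Pom:1965} or \cite[Chapter~6]{PoM:1975}: transition functions $v(z,s,t)=f_t^{-1}\circ f_s$, Schwarz--Herglotz structure of their infinitesimal generators to produce $p$ with $\Re p>0$, and Carath\'eodory ODE theory for the converse. The obstacles you flag (a.e.\ differentiability in $t$ via a normal-family/Vitali argument, and measurable-in-time ODE existence) are exactly the technical points that require care in the original source. If your goal is to supply a self-contained proof, your plan is sound; if your goal is to match what the present paper does, you should simply cite \cite{Pom:1965} as the author does.
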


\begin{rem}\label{remforLC}
It is known that $a_{1}(t)$ is admitted to be a complex-valued function (\cite{Hotta:2010a}).
In addition, it should be noted here about constant terms of L\"owner chains.
If $f(z,t)$ is a L\"owner chain then $f(z,t) + c$ satisfies all the conditions of the definition of L\"owner chains and the sufficient conditions of Theorem \ref{pom} with a modification of $k'$, where $c$ is a complex constant which does not depend on $t$. 
For this reason here and hereafter we shall also treat such functions as L\"owner chains.
\end{rem}

The next theorem which is due to Becker plays a central role in our argument:

\begin{knownthm}[\cite{Becker:1972, Becker:1976}]\label{becker}%%%%%%
Suppose that $f_{t}(z) = f(z,t)$ is a L\"owner chain for which $p(z,t)$ in \eqref{LDE} satisfies the condition 
\begin{equation*}
\begin{array}{lrll}
p(z,t) \in U(k) &\!:=\!&
\dstyle
\left\{
w \in \C : \left|\frac{w-1}{w+1}\right| \leq k
\right\}\\[15pt]
&\!=\!&
\dstyle
\left\{
w \in \C: \left|w-\frac{1+k^{2}}{1-k^{2}}\right| \leq \frac{2k}{1-k^{2}}
\right\}
\end{array}
\end{equation*}
for all $z \in \D$ and almost all $t \in [0,\infty)$.
Then $f(z,t)$ admits a continuous extension to $\closure{\D}$ for each $t \geq 0$ and the map $\hat{f}$ defined by
\begin{equation*}\label{beckerineq}%%
\hat{f}(re^{i\theta}) =
\left\{
\begin{array}{ll}
\dstyle f(re^{i\theta},0), &\textit{if}\hspace{10pt}   r < 1, \\[5pt]
\dstyle f(e^{i\theta},\log r), &\textit{if}\hspace{10pt}   r \geq 1,
\end{array} 
\right.
\end{equation*}
is a $k$-quasiconformal extension of $f_{0}$ to $\C$.
\end{knownthm}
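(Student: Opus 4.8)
The plan is to verify that $\hat{f}$ is a sense-preserving homeomorphism of $\C$ whose complex dilatation satisfies $|\round_{\bar w}\hat{f}|\le k\,|\round_w\hat{f}|$ almost everywhere. On the disk $\{|w|<1\}$ one has $\hat{f}=f_0$, which is holomorphic, so its dilatation vanishes and all the content lies on the exterior $\{|w|>1\}$. There I reparametrise by $w=re^{i\theta}$ and set $t=\log r\in[0,\infty)$, so that $\hat{f}(w)=f(e^{i\theta},t)$; thus the radial variable is L\"owner time and each circle $|w|=e^{t}$ is mapped to the curve $\theta\mapsto f(e^{i\theta},t)$ bounding $f_t(\D)$. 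The bijectivity of $\hat{f}$ rests on the geometry of the chain: since $f_s(\D)\subsetneq f_t(\D)$ for $s<t$ the images are nested, and $\lim_{t\to\infty}|a_1(t)|=\infty$ forces them to exhaust $\C$, so the boundary curves sweep out $\C\setminus f_0(\D)$ exactly once. Continuity across $|w|=1$ is built into the definition of $\hat{f}$.

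The heart of the proof is the complex dilatation on the exterior. Writing $\log w=\log|w|+i\arg w=t+i\theta$ and applying the chain rule through the Wirtinger operators, with $\zeta=e^{i\theta}$ and $\round_\theta f(e^{i\theta},t)=i\zeta\,\round_z f(\zeta,t)$, I obtain
\begin{equation*}
\round_w\hat{f}=\frac{1}{2w}\bigl(\round_t f+\zeta\,\round_z f\bigr),\qquad
\round_{\bar w}\hat{f}=\frac{1}{2\bar w}\bigl(\round_t f-\zeta\,\round_z f\bigr),
\end{equation*}
all derivatives of $f$ being evaluated at $(\zeta,t)$. I then invoke the L\"owner differential equation furnished by Theorem \ref{pom}, $\round_t f=z\,\round_z f\,p(z,t)$, at $z=\zeta$; substituting $\round_t f=\zeta\,\round_z f\,p$ turns each bracket into $\zeta\,\round_z f$ times $(p\pm1)$, and the common factor cancels in the quotient, yielding
\begin{equation*}
\frac{\round_{\bar w}\hat{f}}{\round_w\hat{f}}=\frac{w}{\bar w}\cdot\frac{p(\zeta,t)-1}{p(\zeta,t)+1}.
\end{equation*}
Because $|w/\bar w|=1$, the hypothesis $p(z,t)\in U(k)$, which by definition means $|(p-1)/(p+1)|\le k$, gives $|\round_{\bar w}\hat{f}|\le k\,|\round_w\hat{f}|$ almost everywhere on $\{|w|>1\}$, and the bound holds trivially on $\{|w|<1\}$.

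The genuine difficulty is not this formal identity but its rigorous justification, since the chain rule is applied on the unit circle where the L\"owner data are only guaranteed inside $\D$. I must ensure that $f(\cdot,t)$ extends to a homeomorphism of $\closure{\D}$ (so that the boundary curves are Jordan curves and $\hat{f}$ is injective) and that $\hat{f}$ is ACL with locally integrable distributional derivatives, with no singular contribution created along the gluing circle $|w|=1$. The cleanest route is an approximation argument: for $\rho<1$ the rescaled chains produce honest quasiconformal maps with dilatation bounded by $k$ on their domains of regularity---here the boundedness of $p$ on $U(k)$, whose real part is at least $(1-k)/(1+k)>0$ and whose modulus is at most $(1+k)/(1-k)$, keeps $\round_z f\neq0$ and $p\neq-1$ so the denominators never degenerate---and one lets $\rho\to1$, using the compactness of families of quasiconformal maps with a common dilatation bound to identify the limit with $\hat{f}$; alternatively one argues directly and removes the real-analytic arc $\{|w|=1\}$, which is a quasiconformal removability set. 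Either way the uniform bound $|\round_{\bar w}\hat{f}|\le k\,|\round_w\hat{f}|$ survives in the limit, and since $\hat{f}|_{\D}=f_0$ we conclude that $\hat{f}$ is the desired $k$-quasiconformal extension of $f_0$ to $\C$.
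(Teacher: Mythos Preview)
The paper does not contain a proof of this statement: Theorem~G is Becker's theorem, quoted from \cite{Becker:1972, Becker:1976} as a known tool and used without proof in Section~3. There is therefore no ``paper's own proof'' to compare against.

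That said, your sketch is a faithful outline of Becker's original argument. The computation of the complex dilatation on $\{|w|>1\}$ via $w=e^{t+i\theta}$ and the L\"owner differential equation is exactly the standard one, and you correctly identify that the formal calculation is easy while the regularity issues---continuous extension of each $f_t$ to $\closure{\D}$, ACL across the unit circle, injectivity from the nesting $f_s(\D)\subsetneq f_t(\D)$ and exhaustion---carry the real work. Your two suggested routes (approximate by rescaled chains $f(\rho z,t)$ and pass to the limit using normal-family compactness of $k$-quasiconformal maps, or invoke removability of the real-analytic circle) are both viable and are the approaches found in the literature. One small point: the exhaustion of $\C$ by $f_t(\D)$ does not follow from $|a_1(t)|\to\infty$ alone without some uniform control; in Becker's setting the bound $p\in U(k)$ provides this control, and in general one appeals to the structure theory of L\"owner chains (e.g.\ Pommerenke's theorem that $f_t(z)/a_1(t)$ is a normal family).
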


In other words, if $f_{t}$ is normalized by $f_{t}(0)=0$ then the above theorem gives a sufficient condition for $f_{0} \in \SS_{0}(k)$.

%%%33333333333333333333333333333333333333333333333333333333333
%%%33333333333333333333333333333333333333333333333333333333333
%%%33333333333333333333333333333333333333333333333333333333333
%%%33333333333333333333333333333333333333333333333333333333333
%%%33333333333333333333333333333333333333333333333333333333333
\section{Proof of Theorem \ref{gbeckerqc}}
%%%33333333333333333333333333333333333333333333333333333333333
%%%33333333333333333333333333333333333333333333333333333333333
%%%33333333333333333333333333333333333333333333333333333333333
%%%33333333333333333333333333333333333333333333333333333333333
%%%33333333333333333333333333333333333333333333333333333333333

Firstly we show the first part of Theorem \ref{gbeckerqc}.
Set
\begin{equation}\label{gbeckerLC}
	F(z,t) := 
		Q(f(e^{-t}z)) + 
		(1+c)^{-1}
		(e^{t} - e^{-t}) 
		z Q'(f(e^{-t}z))f'(e^{-t}z).
\end{equation}
We take into account that $1 + c \neq 0$ since the inequality \eqref{gbeckerqceq} implies $|c| \leq k' < 1$ (see \cite[Remark 1.1 and 1.2]{Hotta:2010b}).
Then we have
\begin{equation}\label{proof1}%%%%%%
\begin{tabular}{llll}
	$\dstyle
	\left|
	\frac{\round_{t}F(z,t) - z \round_{z}F(z,t)}{\round_{t}F(z,t) + z \round_{z}F(z,t)}
	\right|$\\[12pt]
	\hspace{20pt}$\dstyle
	=
		\left|
		e^{-2t} c + (1 - e^{-2t}) 
		\left\{e^{-t}z \frac{f''(e^{-t}z)}{f'(e^{-t}z)} + e^{-t}z f'(e^{-t}z)\Omega(f(e^{-t}z))
		\right\}
 		\right|
	$
\end{tabular}
\end{equation}

\no
where $\Omega = Q'' / Q'$.
The right-hand side of \eqref{proof1} is always less than or equal to $k'$ from \eqref{gbeckerqceq} and hence $g := Q \circ f$ can be extended to a $k'$-quasiconformal mapping to $\CC$ by Theorem \ref{pom} and Theorem \ref{becker}.
Since $Q$ has a $(k-k')/(1-kk')$-quasiconformal extension to $\CC$ we conclude that $f = Q^{-1} \circ g \in \SS(k)$.

The second part of Theorem \ref{gbeckerqc} easily follows to define $Q := g \circ f^{-1}$, where $g$ is an analytic function defined on $\D$ which satisfies original Ahlfors's $k'$-quasiconformal extension condition.
\epf

%%%4444444444444444444444444444444444444444444444444444444444
%%%4444444444444444444444444444444444444444444444444444444444
%%%4444444444444444444444444444444444444444444444444444444444
%%%4444444444444444444444444444444444444444444444444444444444
%%%4444444444444444444444444444444444444444444444444444444444
\section{Further results}
%%%4444444444444444444444444444444444444444444444444444444444
%%%4444444444444444444444444444444444444444444444444444444444
%%%4444444444444444444444444444444444444444444444444444444444
%%%4444444444444444444444444444444444444444444444444444444444
%%%4444444444444444444444444444444444444444444444444444444444

We can derive similar necessary conditions and sufficient conditions for quasiconformal extensions as Theorem \ref{gbeckerqc}.
We select one example out of a large variety of possibilities.
This is based on the Noshiro-Warschawski theorem \cite{Noshiro:1934, Warschawski:1935}.

\begin{thm}\label{NW}
Let $f \in \A$ and $k \in [0,1)$. 
If there exists a $k' \in [0,k)$ and an analytic function $Q$ defined on $f(\D)$ which is univalent in $f(\D)$, has a $(k-k')/(1-kk')$-quasiconformal extension to $\CC$ and satisfies $Q'(0) \neq 0$ such that for all $z \in \D$ 
	\begin{equation}\label{NWineq}%%%%%%
	f'(z)Q'(f(z)) \in U(k')
	\end{equation}
then $f \in \SS(k)$, where $U(k')$ is the disk defined in Theorem \ref{becker}.
Conversely, if $f \in \SS(k)$ then there exists a $k' \in [0,1)$ and an analytic function $Q$ defined on $f(\D)$ which is univalent in $f(\D)$, has a $(k+k')/(1+kk')$-quasiconformal extension to $\CC$ and satisfies $Q'(0) \neq 0$ such that the inequality \eqref{NWineq} holds for all $z \in \D$.
\end{thm}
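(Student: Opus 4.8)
The plan is to follow the proof of Theorem~\ref{gbeckerqc}, replacing the Becker-type L\"owner chain \eqref{gbeckerLC} by the simpler chain adapted to the Noshiro-Warschawski criterion. The starting observation is that $f'(z)Q'(f(z))=(Q\circ f)'(z)$, so putting $g:=Q\circ f$ turns the hypothesis \eqref{NWineq} into $g'(z)\in U(k')$ for all $z\in\D$. The first assertion then splits into two parts: (i)~the function $g$ admits a $k'$-quasiconformal extension to $\CC$; (ii)~since $f=Q^{-1}\circ g$ and $Q$, hence $Q^{-1}$, has a $(k-k')/(1-kk')$-quasiconformal extension to $\CC$, the composition property of quasiconformal mappings (a $k_{1}$-quasiconformal map followed by a $k_{2}$-quasiconformal map is $(k_{1}+k_{2})/(1+k_{1}k_{2})$-quasiconformal) furnishes a quasiconformal extension of $f$ to $\CC$; with $k_{1}=k'$ and $k_{2}=(k-k')/(1-kk')$ one computes $(k_{1}+k_{2})/(1+k_{1}k_{2})=k$, so that $f\in\SS(k)$.

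For part~(i) I would introduce the L\"owner chain
\[
	F(z,t):=g(z)+(e^{t}-1)z\qquad(z\in\D,\ t\geq 0),
\]
so that $F(z,0)=g(z)$. A direct computation gives $\round_{t}F=e^{t}z$ and $z\,\round_{z}F=z\bigl(g'(z)+e^{t}-1\bigr)$, hence $F$ satisfies the L\"owner equation $\round_{t}F=z\,\round_{z}F\cdot p$ with $p(z,t)=e^{t}/\bigl(g'(z)+e^{t}-1\bigr)$. Since $g'(z)\in U(k')\subset\{w\in\C:\Re w>0\}$ and $e^{t}\geq 1$, we have $\Re\bigl(g'(z)+e^{t}-1\bigr)>0$, so $\Re p(z,t)>0$; the other hypotheses of Theorem~\ref{pom} are checked readily, the linear coefficient $a_{1}(t)=g'(0)+e^{t}-1$ being nonzero (as $\Re g'(0)>0$) with $|a_{1}(t)|\to\infty$ and $|F(z,t)|\leq K|a_{1}(t)|$ on each disk $\D_{r}$ by boundedness of $g$ there, while the constant term $g(0)$ is harmless by Remark~\ref{remforLC}. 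Thus $F$ is a L\"owner chain. Finally
\[
	\frac{p(z,t)-1}{p(z,t)+1}=\frac{1-g'(z)}{2e^{t}-1+g'(z)},
\]
and since $2e^{t}-1+g'(z)$ and $1+g'(z)$ have equal imaginary parts while $\Re\bigl(2e^{t}-1+g'(z)\bigr)\geq\Re\bigl(1+g'(z)\bigr)>0$, we obtain $|2e^{t}-1+g'(z)|\geq|1+g'(z)|$; together with \eqref{NWineq} this gives $\bigl|\tfrac{p-1}{p+1}\bigr|\leq\bigl|\tfrac{1-g'(z)}{1+g'(z)}\bigr|\leq k'$, i.e.\ $p(z,t)\in U(k')$ for all $z\in\D$ and $t\geq 0$. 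Theorem~\ref{becker} then produces the desired $k'$-quasiconformal extension of $g=F(\cdot,0)$, which, being asymptotically the identity at $\infty$, is a $k'$-quasiconformal self-map of $\CC$.

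Part~(ii) is then just the composition formula for maximal dilatations recorded above, whence $f\in\SS(k)$. For the converse, if $f\in\SS(k)$ it suffices to take $k':=0$ and $Q:=f^{-1}$ (regarded on $f(\D)$): then $Q$ is univalent on $f(\D)$ with $Q'(0)=1/f'(0)=1\neq 0$, the inverse $f^{-1}$ extends to a $k$-quasiconformal, hence $(k+k')/(1+kk')$-quasiconformal, self-map of $\CC$ because $f$ does, and $f'(z)Q'(f(z))=f'(z)(f^{-1})'(f(z))=1\in U(0)=U(k')$, so \eqref{NWineq} holds trivially. (More generally one may take $Q:=g\circ f^{-1}$ for any analytic $g$ on $\D$ with $g'\in U(k')$, exactly as in the converse part of Theorem~\ref{gbeckerqc}.)

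I expect the only genuinely delicate points to be the dilatation estimate $p(z,t)\in U(k')$ and, interlaced with it, the care needed to pass Becker's extension of $g$ from $\C$ to $\CC$ and to keep track of how the dilatations combine under $f=Q^{-1}\circ g$; the rest is a routine transcription of the proof of Theorem~\ref{gbeckerqc}, with \eqref{gbeckerLC} replaced by $F(z,t)=g(z)+(e^{t}-1)z$.
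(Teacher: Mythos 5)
Your proposal is correct and follows essentially the same route as the paper: the same L\"owner chain $F(z,t)=Q(f(z))+(e^{t}-1)z$, verification of Pommerenke's conditions, the bound $p(z,t)\in U(k')$ (which you obtain by direct estimation of $|(p-1)/(p+1)|$, where the paper instead notes that $1/p$ is a convex combination of $Q'(f)f'$ and $1$ in the inversion-invariant convex disk $U(k')$), Becker's theorem, and the dilatation composition formula for $f=Q^{-1}\circ g$. The converse via $Q:=g\circ f^{-1}$ (your choice $g=\mathrm{id}$, $k'=0$ being an admissible special case) also matches the paper's argument.
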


\begin{proof}
Let us put
\begin{equation}\label{NWLC}
	F(z,t) := 
	Q(f(z)) + (e^{t} -1)z.
\end{equation}
Then calculations show that 
$$
	\frac{z \round_{z}F(z,t)}{\round_{t}F(z,t)}
	=
	\frac{1}{e^{t}} \,Q'(f(z)) f'(z) + \left(1-\frac1{e^{t}}\right).
$$
Following the lines of the proof of Theorem \ref{gbeckerqc} one can deduce all the assertions of Theorem \ref{NW}.
\end{proof}

Various similar results as Theorem \ref{NW} can be proved to choose the other univalence criterion and set a suitable L\"owner chain.
For example, the condition
$$
\frac{zf'(z)}{\Phi(f(z))} \in U(k')
$$
which is based on the definition of $\Phi$-like functions is given by the L\"owner chain
$$
	F(z,t) = 
	e^{t} Q(f(z)),
$$
or
$$
\frac
{f'(z) \left( f(z)/z\right)^{s-1}}
{\left(p(z)/z\right)^{\alpha}}
\Psi(f(z))
\in U(k')
$$
which is based on the definition of the Bazilevi\v c functions is given by
$$
	F(z,t) = 
	\left\{
	Q(f(z))^{s} + s (e^{t} - 1) p(z)^{\alpha} z^{i \beta}
	\right\}^{1/s},
$$
where $\Phi$ and $\Psi$ are functions defined in Section 1.

%%%5555555555555555555555555555555555555555555555555555555555
%%%5555555555555555555555555555555555555555555555555555555555
%%%5555555555555555555555555555555555555555555555555555555555
%%%5555555555555555555555555555555555555555555555555555555555
%%%5555555555555555555555555555555555555555555555555555555555
\section{Applications}
%%%5555555555555555555555555555555555555555555555555555555555
%%%5555555555555555555555555555555555555555555555555555555555
%%%5555555555555555555555555555555555555555555555555555555555
%%%5555555555555555555555555555555555555555555555555555555555
%%%5555555555555555555555555555555555555555555555555555555555

In this section we consider several applications of theorems we have obtained in previous sections, in particular, Theorem \ref{gbeckerqc} and Theorem \ref{NW}.
Two specific forms of the function $Q$ which is univalent on a certain domain and can be extended to a quasiconformal mapping to $\CC$ are given.

We remark that in the cases below $Q$ does not need to be normalized by $Q(0) =0$. 
For L\"owner chains $F(z,t)$ defined in \eqref{gbeckerLC} and \eqref{NWLC} we have $F(0,t) = Q(0)$ which implies that in both cases a constant term of $F(z,t)$ does not depend on $t$.
Hence, as we noted in Remark \ref{remforLC}, $F(z,t)$ is a L\"owner chain even though $Q(0) \neq 0$.
This fact allows us to avoid some technical complications.

%%%51515151515151515151515151515151515151515151515151515151
%%%51515151515151515151515151515151515151515151515151515151
\subsection{M\"obius transformations}
%%%51515151515151515151515151515151515151515151515151515151
%%%51515151515151515151515151515151515151515151515151515151

Let $Q_{1}$ be the M\"obius transformation given by
\begin{equation}\label{moebius}
Q_{1}(w) := \frac{\alpha w + \beta}{\gamma w+ \delta} 
\hspace{15pt} 
(\alpha,\,\beta,\,\gamma,\,\delta \in \C,\,\gamma \neq 0,\,\alpha\delta - \beta\gamma = 1).
\end{equation}
For a given $f \in \A$ we suppose that $-\delta/\gamma \notin f(\D)$, for otherwise $Q_{1}$ is no longer analytic on $f(\D)$.
Thus $Q_{1}$ is considered as a function which is analytic and univalent on $f(\D)$ and has a 0-quasiconformal extension to $\CC$.
We note that $Q_{1}$ is the unique function which it can be chosen as $Q$ in Theorem \ref{gbeckerqc} and Theorem \ref{NW} without any restrictions on the shape of $f(\D)$.

Simple calculations show that $Q_{1}'(w) = 1/(\gamma w + \delta)^{2}$ and $Q_{1}''(w)/Q_{1}'(w) = -2 /(w + (\delta/\gamma))$, and hence by defining $Q := Q_{1}$ we obtain the following new quasiconformal extension criteria as corollaries of Theorem \ref{gbeckerqc} and Theorem \ref{NW}:

\begin{cor}\label{cor1}
Let $f \in \A$ and $k \in [0,1)$. 
If $f$ satisfies
$$
	\left| c_{1} |z|^{2} + 
	(1-|z|^{2})
	\left\{
	\frac{zf''(z)}{f'(z)} - \frac{2 zf'(z)}{f(z) - c_{2}}
	\right\}
	\right|
	\leq k
$$
for some constants $c_{1},\,c_{2}\in \C,\,c_{2} \notin f(\D)$, and for all $z \in \D$, then $f \in \SS(k)$.
\end{cor}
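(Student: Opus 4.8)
The plan is to obtain Corollary \ref{cor1} as a direct specialization of Theorem \ref{gbeckerqc} by taking $Q := Q_1$, the M\"obius transformation \eqref{moebius}, with the particular choice $-\delta/\gamma = c_2$ (so that $Q_1$ is analytic on $f(\D)$, as guaranteed by the hypothesis $c_2 \notin f(\D)$) and $c := c_1$. First I would record that, since $Q_1$ is a M\"obius transformation, it is univalent on all of $\CC$ and in particular on $f(\D)$, and it extends to a conformal automorphism of $\CC$, i.e.\ a $0$-quasiconformal self-map of $\CC$, with $Q_1'(0) = 1/\delta^2 \neq 0$ (note $\delta \neq 0$ since $\alpha\delta - \beta\gamma = 1$ would otherwise force $\beta\gamma = -1$, which is fine, but if $\delta = 0$ then $-\delta/\gamma = 0 \in f(\D)$, contradicting the hypothesis that $c_2 = -\delta/\gamma \notin f(\D)$ when $c_2 = 0$; in any case one can always normalize so that $Q_1$ is analytic and nonconstant at $0$). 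The key point is that the value $(k-k')/(1-kk')$ required of the quasiconformal dilatation of $Q$ in Theorem \ref{gbeckerqc} is automatically satisfied here with $k' = k$, because $0 \le (k-k)/(1-k^2) = 0$, so $Q_1$ being $0$-quasiconformal is more than enough.

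Next I would carry out the elementary computation of $\Omega = Q_1''/Q_1'$. From $Q_1'(w) = (\gamma w + \delta)^{-2}$ one gets $Q_1''(w) = -2\gamma(\gamma w + \delta)^{-3}$, hence
\[
\Omega(w) = \frac{Q_1''(w)}{Q_1'(w)} = \frac{-2\gamma}{\gamma w + \delta} = \frac{-2}{w + \delta/\gamma} = \frac{-2}{w - c_2}.
\]
Substituting this into the inequality \eqref{gbeckerqceq} of Theorem \ref{gbeckerqc} with $c = c_1$ turns the general hypothesis into exactly
\[
\left| c_1 |z|^2 + (1-|z|^2)\left\{ \frac{zf''(z)}{f'(z)} - \frac{2 z f'(z)}{f(z) - c_2} \right\} \right| \le k,
\]
which is precisely the hypothesis of Corollary \ref{cor1}. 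Applying the first (sufficiency) part of Theorem \ref{gbeckerqc} with $k' = k$ then yields $f \in \SS(k)$, completing the proof.

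There is essentially no obstacle here: the content is entirely in Theorem \ref{gbeckerqc}, and this corollary is just the observation that M\"obius transformations form a family of admissible choices of $Q$ that imposes no geometric restriction on $f(\D)$ beyond $c_2 \notin f(\D)$. The only minor point requiring a word of care is verifying that $Q_1$ genuinely satisfies all the structural assumptions demanded of $Q$ in Theorem \ref{gbeckerqc} --- analyticity and univalence on $f(\D)$, the quasiconformal extension bound, and $Q'(0) \neq 0$ --- and, as noted above, each of these is immediate from the basic properties of M\"obius maps together with the standing assumption $c_2 \notin f(\D)$. One should also remark, as the paper does at the start of the Applications section, that $Q_1$ need not fix the origin; since the constant term $F(0,t) = Q_1(f(0)) = Q_1(0)$ of the associated L\"owner chain is independent of $t$, Remark \ref{remforLC} applies and the argument of Theorem \ref{gbeckerqc} goes through unchanged.
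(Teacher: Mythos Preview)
Your proposal is correct and follows exactly the paper's approach: specialize Theorem~\ref{gbeckerqc} to $Q = Q_1$ a M\"obius transformation with pole at $c_2 = -\delta/\gamma \notin f(\D)$, compute $\Omega = Q_1''/Q_1' = -2/(w-c_2)$, and use that $Q_1$ is $0$-quasiconformal on $\CC$. The only formal wrinkle---shared by the paper---is that Theorem~\ref{gbeckerqc} as stated asks for $k' \in [0,k)$ strictly, whereas you (and the paper) take $k' = k$; this is harmless since the proof of Theorem~\ref{gbeckerqc} goes through verbatim with $k' = k$ when $Q$ is conformal, because composing with $Q^{-1}$ then does not increase the dilatation.
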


\begin{cor}\label{cor2}
Let $f \in \A$ and $k \in [0,1)$. 
If $f$ satisfies
$$
\frac{f'(z)}{(\gamma f(z) + \delta)^{2}} \in U(k)
$$
for some constants $\gamma,\, \delta \in \C,\,\gamma \neq 0,\,-\delta/\gamma \notin f(\D)$, and for all $z \in \D$, then $f \in \SS(k)$.
\end{cor}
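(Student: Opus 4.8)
The plan is to re-run the L\"owner-chain argument behind Theorem \ref{NW} with the specific choice $Q = Q_{1}$, exploiting that a M\"obius transformation is conformal on all of $\CC$: then nothing is lost in passing from $g := Q_{1}\circ f$ back to $f = Q_{1}^{-1}\circ g$, which is what lets the conclusion be stated with $U(k)$ itself rather than with some $U(k')$, $k'<k$ (so that citing Theorem \ref{NW} verbatim is not quite enough). First I would record that $f(0)=0\in f(\D)$ while $-\delta/\gamma\notin f(\D)$, so $\delta\neq 0$ and $Q_{1}$ is analytic and univalent on $f(\D)$; moreover $g'(z)=Q_{1}'(f(z))f'(z)=f'(z)/(\gamma f(z)+\delta)^{2}$, which by hypothesis lies in $U(k)$ for every $z\in\D$.

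Next I would take the chain from \eqref{NWLC},
$$
	F(z,t) := Q_{1}(f(z)) + (e^{t}-1)z \qquad (z\in\D,\ t\ge 0),
$$
for which $\round_{t}F = e^{t}z$ and $z\,\round_{z}F = z\bigl(g'(z)+e^{t}-1\bigr)$, so that
$$
	p(z,t) := \frac{\round_{t}F(z,t)}{z\,\round_{z}F(z,t)} = \frac{1}{e^{-t}g'(z)+(1-e^{-t})}.
$$
The first key point is that $p(z,t)\in U(k)$ for all $z\in\D$ and $t\ge 0$. Indeed $U(k)$ is a closed disk, hence convex, and $g'(z)\in U(k)$, $1\in U(k)$, so the convex combination $e^{-t}g'(z)+(1-e^{-t})\cdot 1$ again lies in $U(k)$; finally $U(k)$ is invariant under $w\mapsto 1/w$, since $\frac{1/w-1}{1/w+1}=-\frac{w-1}{w+1}$, whence $p(z,t)\in U(k)$, and in particular $\Re p(z,t)>0$ and the denominator above never vanishes, so $p(z,t)$ is analytic in $z$ and continuous in $t$.

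It then remains to check that $F(z,t)$ is a L\"owner chain, so that Theorem \ref{pom} and Theorem \ref{becker} apply. Analyticity in $z$ and local absolute continuity in $t$ are immediate; the constant term $F(0,t)=Q_{1}(0)$ is independent of $t$, hence harmless by Remark \ref{remforLC}; and a routine growth estimate of the form $|F(z,t)|\le M_{r}+e^{t}$ on each $\D_{r}$, $r<1$, gives the first condition of Theorem \ref{pom} once the linear coefficient $a_{1}(t)=g'(0)+e^{t}-1=\delta^{-2}+e^{t}-1$ is known to be nonvanishing with $|a_{1}(t)|\to\infty$. This non-vanishing is the step that needs care, and it is exactly where the hypothesis is used at $z=0$: $\delta^{-2}=g'(0)\in U(k)\subset\{\Re w>0\}$ forces $\Re(\delta^{-2})>0$, so $\Re(1-\delta^{-2})<1\le e^{t}$ and hence $e^{t}\neq 1-\delta^{-2}$ for every $t\ge 0$, i.e.\ $a_{1}(t)\neq 0$. (Here the M\"obius structure of $Q_{1}$, beyond its conformality, is really used only to make this growth/normalization bookkeeping trivial.)

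Together with $\Re p>0$, the above makes $F$ a L\"owner chain by Theorem \ref{pom}, and since $p(z,t)\in U(k)$, Theorem \ref{becker} produces a $k$-quasiconformal extension $\hat g$ of $g=F(\cdot,0)$ to $\C$, hence to $\CC$ by setting $\hat g(\infty)=\infty$. Postcomposing with the conformal self-map $Q_{1}^{-1}$ of $\CC$ yields a $k$-quasiconformal self-homeomorphism $Q_{1}^{-1}\circ\hat g$ of $\CC$ that restricts to $f$ on $\D$, and since $f\in\A$ this says precisely $f\in\SS(k)$. Corollary \ref{cor1} is obtained in the same manner, using the chain \eqref{gbeckerLC} with $Q=Q_{1}$ in place of \eqref{NWLC} together with the computation \eqref{proof1} from the proof of Theorem \ref{gbeckerqc}; the only difference is that the resulting inequality on the dilatation quotient of the chain is then bounded by $k$ directly from the corollary's hypothesis.
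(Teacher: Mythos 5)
Your proof is correct and follows the paper's own route: the paper obtains Corollary \ref{cor2} precisely by taking $Q=Q_{1}$ in Theorem \ref{NW}, i.e.\ by running the chain $F(z,t)=Q_{1}(f(z))+(e^{t}-1)z$ of \eqref{NWLC} that you use, with $p(z,t)\in U(k)$ and Theorems \ref{pom} and \ref{becker} giving the $k$-quasiconformal extension of $g=Q_{1}\circ f$, followed by postcomposition with the M\"obius map $Q_{1}^{-1}$. Your extra care at the endpoint --- re-deriving the $U(k)$ membership of $p(z,t)$ and the nonvanishing of $a_{1}(t)=\delta^{-2}+e^{t}-1$ instead of citing Theorem \ref{NW} verbatim, whose hypothesis requires $k'<k$ strictly --- is a legitimate tightening of a point the paper passes over silently.
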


\begin{rem}
We assumed that $\gamma \neq 0$ in \eqref{moebius} because in the case when $\gamma =0$ the function $Q_{1}$ is an affine transformation and thus Corollary \ref{cor1} and Corollary \ref{cor2} are nothing but well-known quasiconformal extension criteria in \cite{Ahlfors:1974} and \cite{Sugawa:1999a}.
\end{rem}

\begin{rem}
In the above corollaries $\SS(k)$ cannot be replaced by $\SS_{0}(k)$, because $\gamma \neq 0$ which implies $Q_{1}$ does not fix $\infty$.
\end{rem}

%%%525252525252525252525252525252525252525252525252525252
%%%525252525252525252525252525252525252525252525252525252
\subsection{Sector domain}
%%%525252525252525252525252525252525252525252525252525252
%%%525252525252525252525252525252525252525252525252525252

We may set the function $Q$ under the assumption that the image of $\D$ under $f \in \A$ is contained in a quasidisk which has a special shape. 
For instance, we suppose that $f(\D)$ lies in the sector domain 
$$
\Delta(w_{0}, \lambda_{0}, a) := \{w \in \C : \pi\lambda_{0} < \arg(w-w_{0}) < \pi\lambda_{1},\,|\lambda_{1} - \lambda_{0}| <  a\}
$$ 
for $w_{0} \in \C\backslash f(\D)$, $\lambda_{0} \in [0,2)$ and $a \in (0,2)$. 
Then, we define $Q$ in Theorem \ref{gbeckerqc} and Theorem \ref{NW} by $Q_{2}$,
$$
Q_{2}(w) := \left(e^{-i\pi \lambda_{0}}(w-w_{0})\right)^{1/a},
$$
which maps $\Delta(w_{0}, \lambda_{0}, a)$ conformally onto the upper half-plane.
It is verified that $Q_{2}$ can be extended to a $|1-a|$-quasiconformal automorphism of $\C$ as follows:
Let us set 
$$
P_{1}(z) := z^{1/(2-a)},\hspace{10pt}P_{2}(z) := |z|^{(2-a)/a}\frac{z}{|z|},
$$
respectively.
Then the function $P$ defined by
$$
P(z) :=
\left\{
\begin{array}{ll}
\dstyle z^{1/a}, &\textit{if}\hspace{10pt}   z \in \Delta(0,0,a), \\[5pt]
\dstyle -( P_{2} \circ P_{1} )(e^{-\pi a} z), &\textit{if}\hspace{10pt}   z \in \closure{\Delta(0, a, 2-a)},
\end{array} 
\right.
$$
is a $|1-a|$-quasiconformal automorphism of $\C$. 
After composing proper Affine transformations we obtain the desired extension of $Q_{2}$.

Since $Q_{2}''(w)/Q_{2}'(w) = ((1/a)-1) /(w-w_{0})$, we deduce the following:

\begin{cor}\label{Q2tobe}
Let $f \in \A$ and $k \in [0,1)$. 
We assume that $f(\D)$ is contained in the sector domain $\Delta(w_{0}, \lambda_{0}, a)$.
If $f$ satisfies
$$
	\left| c |z|^{2} + 
	(1-|z|^{2})
	\left\{
	\frac{zf''(z)}{f'(z)} + \left(\frac1a -1\right)\frac{zf'(z)}{f(z) - w_{0}}
	\right\}
	\right|
	\leq k
$$
for all $z \in \D$, then $f \in \SS_{0}(\ell)$, where $\ell = (k + |1-a|)/(1+k|1-a|)$.
\end{cor}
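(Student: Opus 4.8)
The plan is to apply Theorem~\ref{gbeckerqc} with the specific choice $Q := Q_2$, where $Q_2(w) = (e^{-i\pi\lambda_0}(w-w_0))^{1/a}$ is the branch mapping the sector $\Delta(w_0,\lambda_0,a)$ conformally onto the upper half-plane. The hypothesis that $f(\D)$ is contained in $\Delta(w_0,\lambda_0,a)$ guarantees that $Q_2$ is analytic and univalent on $f(\D)$, and the explicit construction of $P$ in terms of $P_1$ and $P_2$ exhibits $Q_2$ (after composing with suitable affine maps) as a $|1-a|$-quasiconformal automorphism of $\C$. In particular $Q_2$ does not take the value $\infty$ in $\C$, which is exactly the condition in the Remark following Theorem~\ref{gbeckerqc} allowing $\SS(k)$ to be replaced by $\SS_0(k)$.

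First I would record that $Q_2$ extends to a $k''$-quasiconformal automorphism of $\C$ with $k'' = |1-a|$, and compute $\Omega(w) = Q_2''(w)/Q_2'(w) = ((1/a)-1)/(w-w_0)$, so that the quantity inside the modulus in \eqref{gbeckerqceq} becomes precisely
	\begin{equation*}
	c|z|^2 + (1-|z|^2)\left\{\frac{zf''(z)}{f'(z)} + \left(\frac1a-1\right)\frac{zf'(z)}{f(z)-w_0}\right\},
	\end{equation*}
matching the displayed inequality in the statement of Corollary~\ref{Q2tobe}. The hypothesis then reads that this modulus is $\le k$ for all $z \in \D$. Second, I would set $\ell := (k+|1-a|)/(1+k|1-a|)$ and choose the constant $k'$ in Theorem~\ref{gbeckerqc} so that the relation $(\ell - k')/(1-\ell k') = |1-a|$ holds; solving gives $k' = (\ell - |1-a|)/(1-\ell|1-a|) = k$, which lies in $[0,\ell)$ since $|1-a|>0$ (note $a\ne 1$ is implicit; when $a=1$ the statement degenerates to Becker's own condition and $\ell=k$). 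Thus $Q_2$ has a $(\ell-k')/(1-\ell k')$-quasiconformal extension to $\CC$ with $k'=k$, the inequality \eqref{gbeckerqceq} is satisfied with this $k'$, and Theorem~\ref{gbeckerqc} yields $f \in \SS(\ell)$; the Remark upgrades this to $f \in \SS_0(\ell)$ because the extension of $Q_2$ fixes no point at $\infty$ in $\C$.

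The only genuine point requiring care — and what I expect to be the main obstacle — is the verification that the piecewise-defined map $P$ (hence $Q_2$) is indeed $|1-a|$-quasiconformal on all of $\C$: one must check that the two branches $z^{1/a}$ on $\Delta(0,0,a)$ and $-(P_2\circ P_1)(e^{-\pi a}z)$ on $\overline{\Delta(0,a,2-a)}$ agree continuously along the common boundary rays, that each piece has the claimed Beltrami coefficient bound, and that the gluing along an analytic arc does not destroy quasiconformality (this is a standard reflection-type argument). I would treat this as established by the explicit formulas given just before the corollary, and otherwise the proof is a direct bookkeeping of the constants: it suffices to note that the arithmetic of the parameter $k' = k$ works out precisely so that the two quasiconformality budgets — $k'$ for $g = Q_2\circ f$ via Becker's theorem, and $|1-a|$ for $Q_2^{-1}$ — compose to the dilatation $\ell$ via the standard composition formula $(k_1+k_2)/(1+k_1k_2)$ for quasiconformal maps. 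Hence $f = Q_2^{-1}\circ g \in \SS_0(\ell)$, as claimed.
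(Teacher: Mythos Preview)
Your proposal is correct and follows essentially the same route as the paper: the corollary is deduced directly from Theorem~\ref{gbeckerqc} (together with the Remark after it) by taking $Q=Q_2$, using the computation $Q_2''(w)/Q_2'(w)=((1/a)-1)/(w-w_0)$ and the $|1-a|$-quasiconformal extension of $Q_2$ constructed via $P$. Your explicit bookkeeping of the constants (solving $(\ell-k')/(1-\ell k')=|1-a|$ to get $k'=k$) and the handling of the degenerate case $a=1$ are more detailed than the paper, which simply states the corollary once $\Omega$ has been computed.
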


To state the next corollary we shall put $Q_{3}(w) :=  Q_{2}(w)/ Q_{2}'(0)$, so that $Q_{3} '(w) = (1 - (w/w_{0}))^{(1/a)-1}$ and hence $f'(0)Q_{3} '(0) = 1 \in U(k)$ for any $k \in [0,1)$.

\begin{cor}\label{Q2toNW}
Let $f \in \A$ and $k \in [0,1)$. 
We assume that $f(\D)$ is contained in the sector domain $\Delta(w_{0}, \lambda_{0}, a)$.
If $f$ satisfies
$$
f'(z) \left(1 - \frac{f(z)}{w_{0}}\right)^{(1/a) -1} \in U(k)
$$
for all $z \in \D$, then $f \in \SS_{0}(\ell)$, where $\ell = (k + |1-a|)/(1+k|1-a|)$.
\end{cor}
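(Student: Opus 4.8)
The plan is to obtain Corollary \ref{Q2toNW} as a direct application of Theorem \ref{NW} with $Q := Q_3$, taking the constant of that theorem to be $\ell$ and its auxiliary constant (the ``$k'$'' of Theorem \ref{NW}) to be $k$. First I would verify the structural hypotheses on $Q_3$. Since $f(\D)$ is contained in $\Delta(w_0,\lambda_0,a)$, on which $Q_2$ acts as a conformal bijection onto the upper half-plane, both $Q_2$ and its nonzero scalar multiple $Q_3 = Q_2/Q_2'(0)$ are analytic and univalent on $f(\D)$; moreover $Q_3'(0) = 1 \neq 0$. For the quasiconformal extension I would reuse the explicit gluing through $P$, $P_1$, $P_2$ displayed just before Corollary \ref{Q2tobe}, which exhibits $Q_2$ as (the restriction of) a $|1-a|$-quasiconformal automorphism of $\C$; post-composing with the affine map $w \mapsto w/Q_2'(0)$ changes neither the automorphism property nor the dilatation, so $Q_3$ likewise extends to a $|1-a|$-quasiconformal automorphism of $\C$.

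Next I would reconcile the dilatation constants. A short computation gives $(\ell - k)/(1 - \ell k) = |1-a|$ when $\ell = (k+|1-a|)/(1+k|1-a|)$, so the extension of $Q_3$ found above is exactly a $(\ell-k)/(1-\ell k)$-quasiconformal map as required by Theorem \ref{NW} (with $\ell$, $k$ playing the roles of $k$, $k'$ there); one also notes $k \in [0,\ell)$ whenever $a \neq 1$, the degenerate case $a=1$ being immediate because $Q_3$ is then affine and hence conformal on all of $\C$. Since $Q_3'(w) = (1 - w/w_0)^{(1/a)-1}$, the membership condition $f'(z)Q_3'(f(z)) \in U(k)$ of Theorem \ref{NW} is precisely the hypothesis $f'(z)(1-f(z)/w_0)^{(1/a)-1} \in U(k)$ of the corollary, so Theorem \ref{NW} yields $f \in \SS(\ell)$. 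Finally, because the quasiconformal extension of $Q_3$ constructed above maps $\C$ onto $\C$, the analogue for Theorem \ref{NW} of the remark following Theorem \ref{gbeckerqc} applies and upgrades the conclusion to $f \in \SS_0(\ell)$.

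I do not anticipate a genuine obstacle. The only delicate points are the consistent choice of the branch of the power $(1-w/w_0)^{(1/a)-1}$ on the sector, which is legitimate because $w_0 \notin f(\D)$ and the angular opening of $\Delta(w_0,\lambda_0,a)$ keeps $1 - w/w_0$ in a half-plane not containing the branch cut, and the elementary bookkeeping of the composed dilatations; both are routine and parallel the treatment of Corollary \ref{Q2tobe}.
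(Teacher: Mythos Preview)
Your proposal is correct and follows exactly the route the paper sets up: define $Q_3 = Q_2/Q_2'(0)$, use the explicit $|1-a|$-quasiconformal extension of $Q_2$ already constructed, and apply Theorem~\ref{NW} with the pair $(\ell,k)$ in place of $(k,k')$, invoking the $\SS_0$ remark because the extension is an automorphism of $\C$. Your handling of the edge case $a=1$ and the branch of the power is a welcome bit of care that the paper leaves implicit.
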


As a special case of Corollary \ref{Q2toNW}, if we can choose $w_{0}=1$ and $a = 1/2$, then we will have a $(2k+1)/(k+2)$-quasiconformal extension criterion $f'(z) (1 - f(z)) \in U(k)$.

%%%535353535353535353535353535353535353535353535353535353
%%%535353535353535353535353535353535353535353535353535353
\subsection{Bounded functions}
%%%535353535353535353535353535353535353535353535353535353
%%%535353535353535353535353535353535353535353535353535353

%The assumption of Corollary \ref{Q2tobe} and Corollary \ref{Q2toNW} that $f(\D)$ lies in a sector domain may seem to be less useful in practical situations.
%It will, however, make a valuable contribution when $f \in \A$ is a bounded function on $\D$. % because $f(\D)$ is contained in a disk and thus there must exist a sector domain which includes the disk.
%It is worth to consider the case when $f$ is bounded, because if $f \in \SS$ is unbounded then $f \notin \SS_{0}(k)$ since an extended quasiconformal mapping of $f$ takes $\infty$ at a point on the boundary of $\D$. 
%It is worth to consider such the case because if $f \in \SS$ is unbounded then $f \notin \SS_{0}(k)$ since an extended quasiconformal mapping of $f$ takes $\infty$ at a point on the boundary of $\D$. 
%Therefore, it is enough to deal with only bounded components of $\SS$ when we investigate whether $f \in \SS$ is contained in $\SS_{0}(k)$ or not.

Corollary \ref{Q2tobe} and Corollary \ref{Q2toNW} may seem to be less useful in practical situations because of their assumption that $f(\D)$ lies in a sector domain. 
%It will, however, be used effectively when we investigate whether $f \in \SS$ is contained in $\SS_{0}(k)$ or not for the following reason:
It will, however, be used effectively when we investigate whether $f \in \SS$ is contained in $\SS_{0}(k)$ or not. 
Actually, it is enough to deal with only bounded components of $\SS$ in this problem because if $f$ is unbounded then $f \notin \SS_{0}(k)$.

We can easily find a precise sector domain which includes $f(\D)$.
If $f \in \A$ is bounded on $\D$, then there exists a constant $M := \sup_{z \in \D} |f(z)|$. 
Since $f(\D) \subset \D_{M}$, $f(\D)$ is contained in $\Delta(w_{0}, \lambda_{2}, 2 \arcsin (M/|w_{0}|))$ for $w_{0} \in \C\backslash\D_{M}$ and a suitable $\lambda_{2}$ which will be given below.
Of course one may choose the other disk $\{z \in \C : |z-z_{0}|<R\}$ which contains $f(\D)$ and put a sector by $\Delta(w_{0}, \lambda_{3}, 2\arcsin(R/|w_{0}-z_{0}|))$ for $w_{0} \in \C\backslash\{ |z-z_{0}|<R\}$ and $\lambda_{3} \in [0,2)$.
Here $\lambda_{3} =\arg(z_{0} -w_{0}) + \arg(\sqrt{|w_{0} - z_{0}|^{2} - R^{2}} - i R)$. 
$\lambda_{2}$ is given by the case when $z_{0}=0$ and $R=M$.

\section*{Acknowledgement}
The author expresses his deep gratitude to Professor Stephan Ruscheweyh for many useful and instructive discussions. 
He would like to thank Professor Toshiyuki Sugawa for valuable comments and suggestions.
Part of this work was done while the author was a visitor at the University of W\"urzburg under the JSPS Institutional Program for Young Researcher Overseas Visits.

%ReferencesReferencesReferencesReferencesReferencesReferencesReferences
%ReferencesReferencesReferencesReferencesReferencesReferencesReferences
%ReferencesReferencesReferencesReferencesReferencesReferencesReferences
%ReferencesReferencesReferencesReferencesReferencesReferencesReferences

\bibliographystyle{amsplain}
\def\cprime{$'$}
\providecommand{\bysame}{\leavevmode\hbox to3em{\hrulefill}\thinspace}
\providecommand{\MR}{\relax\ifhmode\unskip\space\fi MR }
% \MRhref is called by the amsart/book/proc definition of \MR.
\providecommand{\MRhref}[2]{%
  \href{http://www.ams.org/mathscinet-getitem?mr=#1}{#2}
}
\providecommand{\href}[2]{#2}

%ReferencesReferencesReferencesReferencesReferencesReferencesReferences
%ReferencesReferencesReferencesReferencesReferencesReferencesReferences
%ReferencesReferencesReferencesReferencesReferencesReferencesReferences
%ReferencesReferencesReferencesReferencesReferencesReferencesReferences

\end{document}